\newtheorem{theorem}{\indent Theorem}[section]
\newtheorem{definition}[theorem]{\indent Definition}
\newtheorem{conj}[theorem]{\indent Conjecture}
\newcommand{\Gal}{\mathrm{Gal}}
\newcommand{\fa}{-8ax^3-(8a+2)x^2+(4a-1)x+a}
\newcommand{\fafa}{4096a^4x^9 + (12288a^4 + 3072a^3)x^8 + (6144a^4 + 7680a^3 + 768a^2)x^7\\ + (-9728a^4 + 2560a^3 + 1408a^2 + 64a)x^6 + (-6144a^4 - 4864a^3 + 64a^2 + 32a)x^5\\ + (3072a^4 - 1920a^3 - 672a^2 - 80a - 8)x^4 + (1216a^4 + 1024a^3 - 64a^2 - 16a - 8)x^3\\ + (-192a^4 + 240a^3 + 40a^2 + 16a)x^2 + (-96a^4 - 40a^3 + 16a^2 - 4a + 1)x\\ + (-8a^4 - 8a^3 + 2a^2)}
\newcommand{\fafaS}{128a^2x^6+(256a^2+32a)x^5+ 32ax^4 + (-160a^2-16a-4)x^3\\-(4a+2)x^2+(16a^2+1)x+2a^2}
\begin{document}
\today

\title{An Infinite Family of Cubics with Emergent Reducibility at Depth 1}
\author{Jason I.~ Preszler}
\email{jpreszler@member.ams.org}

\begin{abstract}
A polynomial $f(x)$ has emergent reducibility at depth $n$ if $f^{\circ k}(x)$ is irreducible for $0\leq k \leq n-1$ but $f^{\circ n}(x)$ is reducible. In this paper we prove that there are infinitely many irreducible cubics $f \in \mathbb{Z}[x]$ with $f\circ f$ reducible by exhibiting a one parameter family with this property. 
\end{abstract}

\maketitle

\section{Introduction}
Given a polynomial $f(x) \in \mathbb{Q}[x]$, one can construct the sequence of iterates $f\circ f(x), f\circ f\circ f(x), \dots, f^{\circ n}(x)$. Such sequences form dynamical systems and have become the focus of considerable scrutiny in recent years. In the 1980's, Odoni proved fundamental facts about the behavior of the discriminant and resultant \cite{odoniPlms}, proved instances where entire sequences consisted of irreducible polynomials \cite{odoniJlms}, gave examples of sequences with irreducible initial terms but reducible terms after a certain point \cite{odoniJlms}, and showed that the Galois groups of $f^{\circ n}(x)$ embed into the $n$-fold iterated wreath product of $\Gal(f)$, the Galois group of $f$ \cite{odoniPlms}. More recently, \cite{newRed} showed that there are finitely many irreducible quadratic polynomials where $f^{\circ n}$ is reducible if $n\ge 2$. In \cite{DanFein} it was already shown that there are infinitely many irreducible quadratics $f$ with $f\circ f$ reducible. It should be noted that once a term in the sequence is reducible all subsequent terms will be reducible. Additionally, Hindes \cite{hindes} has shown that the Galois group of $f^{\circ n}$ can fail to be the full $n$-fold iterated wreath product even when $f^{\circ n}$ is always irreducible. 

The phenomena that we focus on in this paper will be called emergent reducibility (or ``newly reducible'' in \cite{newRed}). 
\begin{definition}[Emergent Reducibility at Depth $n$] We say a polynomial $f(x) \in K[x]$ has \textbf{emergent reducibility at depth $n$} if and only if $f^{\circ i}(x)$ is irreducible over $K[x]$ for $0\leq i \leq n-1$ and $f^{\circ n}(x)$ is reducible over $K[x]$. Note that $n$ counts the number of compositions so $f^{\circ 0}(x) = f(x)$.
\end{definition}

We will prove 
\begin{theorem}\label{inf-er}
There are infinitely many irreducible cubics $f\in \mathbb{Z}[x]$ with emergent reducibility at depth 1.
\end{theorem}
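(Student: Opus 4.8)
The plan is to write down an explicit one-parameter family and verify the two required properties directly. Set $f_a(x) = \fa$ for $a \in \mathbb{Z}$, so that $f_a$ is genuinely cubic whenever $a \neq 0$. To establish emergent reducibility at depth $1$ I must show, for infinitely many $a$, both that $f_a$ is irreducible over $\mathbb{Q}$ and that the first iterate $f_a \circ f_a$ is reducible.

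For the irreducibility of $f_a$ I would argue by reduction modulo a small prime. First note that $f_a$ is primitive, since any common divisor of the coefficients divides both $a$ and $4a-1$ and hence divides $1$; so by Gauss's lemma it suffices to prove irreducibility over $\mathbb{Q}$, and a cubic is irreducible over a field exactly when it has no root there. Reducing modulo $3$ gives $\overline{f_a}(x) \equiv a x^3 + (a+1)x^2 + (a+2)x + a$ in $\mathbb{F}_3[x]$; choosing $a \equiv 1 \pmod 3$ yields $x^3 + 2x^2 + 1$, which one checks has no root in $\mathbb{F}_3$ and is therefore irreducible there. Consequently $f_a$ is irreducible over $\mathbb{Q}$ for every $a$ in this arithmetic progression, already an infinite set. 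If a cleaner modulus or a direct rational-root analysis is preferred, either would serve the same purpose.

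For the reducibility of the second iterate I would expand $f_a \circ f_a$, a polynomial of degree nine, and exhibit the explicit nontrivial factorization $f_a \circ f_a = g_a(x)\, h_a(x)$, where $g_a$ is the cubic
\[ g_a(x) = \fafaC \]
and $h_a$ is a sextic. The verification is a direct polynomial multiplication, organized coefficient by coefficient in $x$; since the leading coefficient of $g_a$ is $32a^2 \neq 0$ for $a \neq 0$, the factor $g_a$ has degree $3$ and $h_a$ has degree $6$, so the factorization is genuinely nontrivial and $f_a \circ f_a$ is reducible.

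Combining the two parts, for every integer $a \equiv 1 \pmod 3$ (in particular $a \neq 0$) the cubic $f_a$ is irreducible while $f_a \circ f_a$ is reducible, which is precisely emergent reducibility at depth $1$; distinct $a$ give distinct constant terms and hence distinct $f_a$, so the family is infinite. I expect the principal obstacle to lie not in verifying the factorization but in locating it — that is, in engineering the family so that the composite splits in the first place. Once the factors $g_a$ and $h_a$ are in hand, the remaining work is the mechanical expansion together with the comparatively easy irreducibility argument for $f_a$.
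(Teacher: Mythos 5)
Your proposal is correct and takes essentially the same route as the paper: the same family $f_a$, irreducibility via reduction modulo $3$ (the paper handles both residues $a\equiv 1,2 \pmod{3}$, while you use only $a\equiv 1 \pmod{3}$, which already gives infinitely many values), and the same explicit cubic factor $g_a$ of $f_a\circ f_a$ with the sextic cofactor obtained by division. No gaps to report.
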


To do this, we show that the family 
\[
f_a(x) =  -8ax^3-(8a+2)x^2+(4a-1)x+a
\]
is irreducible for infinitely many integers $a$ and that $f_a\circ f_a(x)$ is the product of a cubic and sextic polynomial for all $a$, namely 
\[
g_a(x) = 32a^2x^3+(32a^2+16a)x^2+(-16a^2+12a+2)x+(-4a^2-4a+1)
\]
and
\begin{equation}
\begin{split}
h_a(x) = \fafaS.
\end{split}
\end{equation}

\section{The Irreducibility of $f_a$}
Since reducible polynomials will have reducible iterates, it is crucial to show that infinitely many polynomials of the form $f_a(x)$ for $a\in \mathbb{Z}$ are irreducible. This can be accomplished, somewhat unsatisfactorily, by use of Hilbert's Irreducibility Theorem. We can consider our parameterized family as a family of polynomials in two variables, $f_a(x) = f(a,x) \in \mathbb{Q}[a,x]$. Since $f(a,x)$ is linear in $a$ and the ``coefficients'' have no common factors in $\mathbb{Q}[x]$, we know that $f(a,x)$ is irreducible in $\mathbb{Q}[a,x]$. Hilbert's Irreducibility Theorem\cite{serre} ensures that for infinitely many values of $a \in \mathbb{Z}$ the specialization $f_a(x)$ is irreducible in $\mathbb{Q}[x]$. Unfortunately, this fails to give any specific values of $a$ such that $f_a(x)$ is irreducible.

The following theorem gives a more effective determination of when $f_a(x)$ is irreducible. We note that computational evidence suggests that $f_a(x)$ is irreducible for every non-zero $a$ (verified for $0<|a|\leq 10^6$) but for our purposes the following is sufficient.

\begin{theorem}\label{irrFa}
For $a \in \mathbb{Z}$ with $3\nmid a$, the polynomial $f_a(x) = \fa$ is irreducible over $\mathbb{Z}/3$ and therefore irreducible in $\mathbb{Z}[x]$.
\end{theorem}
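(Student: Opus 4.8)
The plan is to prove irreducibility over $\mathbb{Z}/3$ directly and then invoke the standard reduction modulo a prime criterion. First I would reduce each coefficient mod $3$, using $-8\equiv 1$, $8\equiv 2$, and $4\equiv 1 \pmod 3$. This gives
\[
f_a(x) \equiv a x^3 + (a+1)x^2 + (a-1)x + a \pmod 3.
\]
Because $3\nmid a$, the leading coefficient $a$ is a unit in $\mathbb{F}_3$, so the reduction is a genuine cubic and there is no drop in degree. It therefore suffices to show this reduced cubic is irreducible over $\mathbb{F}_3$, and a cubic over a field is irreducible if and only if it has no root in that field.

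The hypothesis $3\nmid a$ leaves exactly two residues, so I would split into the cases $a\equiv 1$ and $a\equiv 2 \pmod 3$. For $a\equiv 1$ the reduction becomes $x^3+2x^2+1$, and for $a\equiv 2$ it becomes $2x^3+x+2$. In each case I would simply evaluate at $x=0,1,2$ and check that none is a root in $\mathbb{F}_3$; this is a short finite verification. Having no roots, each reduced cubic is irreducible over $\mathbb{F}_3=\mathbb{Z}/3$, which establishes the first assertion of the theorem.

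To lift this to $\mathbb{Z}[x]$, I would argue that any factorization $f_a=gh$ in $\mathbb{Z}[x]$ with $\deg g,\deg h\ge 1$ would reduce to a factorization mod $3$. The key point is that the product of the leading coefficients of $g$ and $h$ is $-8a$, which is not divisible by $3$; hence neither leading coefficient vanishes mod $3$, the degrees are preserved under reduction, and we would obtain a nontrivial factorization of the reduced cubic over $\mathbb{F}_3$, contradicting the previous paragraph. The only subtlety worth recording is primitivity: since $\gcd(a,4a-1)=1$ (any common divisor divides $4a-(4a-1)=1$), the content of $f_a$ is $1$, so irreducibility into positive-degree factors is genuine irreducibility in $\mathbb{Z}[x]$. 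There is no real obstacle here—the argument reduces to a two-case root check in $\mathbb{F}_3$—and the only care needed is to confirm at the outset that $3\nmid a$ prevents any degree collapse in the reduction.
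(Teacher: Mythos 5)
Your proposal is correct and follows essentially the same route as the paper: reduce mod $3$, split into the cases $a\equiv 1$ and $a\equiv 2$, and check that each reduced cubic has no root in $\mathbb{F}_3$ (your $2x^3+x+2$ is just the unit multiple $2(x^3+2x+1)$ of the paper's normalized form). Your added care about degree preservation (leading coefficient $-8a$ nonzero mod $3$) and primitivity is a welcome sharpening of the lifting step, which the paper leaves implicit, but it is not a different argument.
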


\begin{proof} 
Since $3\nmid a$, we have
\begin{align}
f_a(x) &= \fa \notag\\
 &\equiv \begin{cases}
x^3+2x^2+1 &\text{if $a\equiv 1\mod(3)$}\\
x^3+2x+1 &\text{if $a\equiv 2\mod(3)$.}
\end{cases}
\end{align} 
Both polynomials are easily seen to be irreducible over $\mathbb{Z}/3$.\\ 
\end{proof}

Considering $a = 3t$ for $t\in \mathbb{Z}$, reduction modulo other small primes shows that even more values of $a$ will result in irreducible polynomials, matching computational evidence. 

\section{The Reducibility of $f_a\circ f_a$}
The required reducibility is easily verified. The composition of $f_a$ with itself is 
\begin{equation}
\begin{split}
f_a\circ f_a(x) = \fafa.
\end{split}
\end{equation}
With the cubic $g_a(x)$ and the sextic $h_a(x)$ defined as
\[
g_a(x) = 32a^2x^3+(32a^2+16a)x^2+(-16a^2+12a+2)x+(-4a^2-4a+1)
\]
and
\begin{equation}
\begin{split}
h_a(x) = \fafaS
\end{split}
\end{equation}
then
\begin{equation}
\begin{split}
g_a(x)h_a(x) = f_a\circ f_a(x).
\end{split}
\end{equation}
Thus, $f_a \circ f_a$ is reducible and together with \ref{irrFa} we have proven:
\begin{theorem}
For $3\nmid a \in \mathbb{Z}$, the polynomials $f_a(x) = -8ax^3-(8a+2)x^2+(4a-1)x+a$ are irreducible with emergent reducibility of depth $1$.
\end{theorem}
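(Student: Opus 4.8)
The final theorem combines two facts already established in the excerpt: by Theorem \ref{irrFa}, $f_a$ is irreducible over $\mathbb{Z}$ whenever $3\nmid a$, and separately the factorization $g_a(x)h_a(x)=f_a\circ f_a(x)$ shows that $f_a\circ f_a$ is reducible. So the plan is essentially to assemble these two ingredients into the definition of emergent reducibility at depth $1$.

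The plan is to proceed in two parts. First I would invoke Theorem \ref{irrFa} directly: since we assume $3\nmid a$, the polynomial $f_a(x)$ reduces modulo $3$ to an irreducible cubic over $\mathbb{Z}/3$, hence $f_a$ is irreducible in $\mathbb{Z}[x]$. This handles the requirement that $f^{\circ 0}_a=f_a$ be irreducible. Second, I would exhibit the explicit factorization $f_a\circ f_a(x)=g_a(x)h_a(x)$ to conclude that the first iterate is reducible. The verification of this factorization is in principle a routine polynomial identity: one multiplies the cubic $g_a$ by the sextic $h_a$ and checks that the resulting degree-nine polynomial equals the displayed composition $f_a\circ f_a$ coefficient by coefficient. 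Because $g_a$ has nonzero leading coefficient $32a^2$ (for $a\neq 0$, which is forced by $3\nmid a$) and likewise $h_a$ has positive degree, neither factor is a unit, so the product genuinely witnesses reducibility.

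Putting these together, for $3\nmid a$ the chain of iterates starts with an irreducible $f_a$ but the very next term $f_a\circ f_a$ is reducible, which is exactly emergent reducibility at depth $1$ by the definition given. Since there are infinitely many integers $a$ with $3\nmid a$, this simultaneously reestablishes Theorem \ref{inf-er}.

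The only genuinely nontrivial step is confirming the polynomial identity $g_a(x)h_a(x)=f_a\circ f_a(x)$, and here the expected obstacle is purely computational bookkeeping rather than conceptual: one must track the $a$-dependent coefficients through a product of a cubic and a sextic and match all ten coefficients of the degree-nine result. I would verify this by direct expansion (most reliably via a computer algebra system), treating $a$ as a formal parameter so that the identity holds in $\mathbb{Z}[a,x]$ and hence for every integer specialization of $a$ at once. No subtlety arises from irreducibility of the factors themselves, since for the theorem we only need that $f_a\circ f_a$ admits a nontrivial factorization, not that $g_a$ and $h_a$ are each irreducible.
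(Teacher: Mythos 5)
Your proposal is correct and follows essentially the same route as the paper: cite Theorem \ref{irrFa} for irreducibility of $f_a$ when $3\nmid a$, then verify the polynomial identity $f_a\circ f_a = g_a h_a$ (as an identity in $\mathbb{Z}[a,x]$) to witness reducibility of the first iterate. Your added remark that $3\nmid a$ forces $a\neq 0$, so that $g_a$ is a genuine cubic and the factorization is nontrivial, is a small but worthwhile point the paper leaves implicit.
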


Theorem \ref{inf-er} is a direct consequence of the existence of the family $f_a(x)$. We also note that computational evidence suggests $g_a$ and $h_a$ are irreducible over $\mathbb{Z}[x]$, but this in ancillary to our requirements. In the next section we show that there are a number of other examples of this behavior.

\section{Other Examples}
There are many examples of non-monic cubics with depth 1 emergent reducibility, including other parameterizable families. The more interesting situation is the case of monic integral cubics where there seems to be only a finite number with depth one emergent reducibility. The following is a list of all known examples where the absolute value of the coefficients are less than $500$.
\begin{align}
x^3 &\pm 9x^2+23x\pm 13\\
x^3&\pm 6x^2 +11x \pm 5\\
x^3&\pm x^2-3x\mp 1\\
x^3&\pm 4x^2 +3x \mp 1
\end{align}

This leads us to make the following conjecture:
\begin{conj}
There are only finitely many monic cubics in $\mathbb{Z}[x]$ with depth one emergent reducibility.
\end{conj}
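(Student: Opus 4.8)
The plan is to first convert the statement into a purely algebraic criterion, then to phrase that criterion as a Diophantine problem and argue finiteness through heights. Let $f(x)=x^3+bx^2+cx+d\in\mathbb{Z}[x]$ be monic and irreducible, let $\alpha$ be a root, and set $K=\mathbb{Q}(\alpha)$. The nine roots of $f\circ f$ lie in three fibers, one over each root of $f$, and since $\Gal(f)$ acts transitively on the roots of $f$ it permutes the three fibers transitively; hence every Galois orbit on the nine roots surjects onto the roots of $f$ and so has size divisible by $3$. Consequently the orbit sizes lie in $\{3,6,9\}$, and $f\circ f$ is reducible precisely when the action is intransitive, in which case there is necessarily an orbit of size $3$: a monic irreducible cubic factor $g\mid f\circ f$ whose roots are exactly one preimage of each root of $f$. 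Taking the root $\beta$ of $g$ lying over $\alpha$, we have $\mathbb{Q}(\alpha)=\mathbb{Q}(f(\beta))\subseteq\mathbb{Q}(\beta)$ with $[\mathbb{Q}(\beta):\mathbb{Q}]=3$, forcing $\beta\in K$. Thus depth-one emergent reducibility of a monic irreducible $f$ is equivalent to the existence of $\phi\in\mathbb{Q}[x]$ with $\deg\phi\le 2$ and
\begin{equation}
f(\phi(x))\equiv x \pmod{f(x)}.
\end{equation}
The converse is immediate: such a $\phi$ yields $\beta=\phi(\alpha)\in K$, an algebraic integer with $f(\beta)=\alpha\ne 0$, whose minimal polynomial is a proper cubic factor of $f\circ f$. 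This is precisely the ``section'' behind the author's factor $g_a$.

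Writing $\phi(x)=ux^2+vx+w$ and reducing $f(\phi(x))$ modulo $f(x)$, the congruence becomes three polynomial equations $E_0=E_1=E_2=0$ in $\mathbb{Q}[b,c,d,u,v,w]$, cutting out a threefold $X\subset\mathbb{A}^6$. First I would study the projection $X\to\mathbb{A}^3$, $(b,c,d,u,v,w)\mapsto(b,c,d)$: over $\overline{\mathbb{Q}}$ the fiber is the set of quadratic interpolants sending each root of $f$ to one of its three preimages, so the generic fiber has $27$ points and the map is dominant. The monic cubics to be counted are therefore the integral points of $\mathbb{A}^3$ over which this degree-$27$ cover acquires a rational point, equivalently those $(b,c,d)\in\mathbb{Z}^3$ for which the level-two arboreal image of $f$ stabilizes one of the $27$ sections. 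I would then eliminate $u,v,w$ by resultants to make the fibration explicit and, crucially, exploit that $\beta=\phi(\alpha)$ is an algebraic integer with $f(\beta)=\alpha$: since $f$ is monic of degree $3$, at each archimedean place $|\alpha|$ is comparable to $|\beta|^3$ once $|\beta|$ is large, which should confine $b,c,d$ to a bounded region unless $\alpha$ and $\beta$ both remain small.

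The hard part will be that finiteness here is genuinely arithmetic rather than geometric: since \emph{every} $(b,c,d)$ carries $27$ sections over $\overline{\mathbb{Q}}$, the reducibility locus is cut out by no nontrivial equation in $b,c,d$ alone, and the monic cubics with a rational section form a thin set. Thinness by itself cannot yield finiteness — dropping the monic normalization, the author's $f_a$ is an infinite thin family with a rational section, and its infinitude comes exactly from letting the leading coefficient vary (it traces a rational curve of solutions in the enlarged coefficient space). Hence the normalization ``leading coefficient $=1$'' is the one constraint that must be exploited. Concretely, I expect the crux to be proving that the monic slice of $X$ contains no positive-dimensional family and that its finitely many lower-dimensional pieces each carry only finitely many integral points; the first part should follow from showing the relevant curves and surfaces are of general type (or of genus $\ge 2$), after which Siegel's and Faltings' theorems apply. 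Making this \emph{uniform} across the family, so as to pass from ``finite on each piece'' to ``finitely many $f$ in total,'' is where a complete argument is likely to require either an explicit height inequality descended from $f(\beta)=\alpha$ or present-day uniformity results for rational points, and is the main obstacle to turning this plan into a theorem.
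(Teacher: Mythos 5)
This statement is a \emph{conjecture} in the paper, not a theorem: the author offers only a finite search (all monic examples with coefficients of absolute value below $500$) as evidence, and gives no proof. So there is no argument of record to compare yours against; your proposal has to stand on its own, and it does not — it is a correct reformulation followed by a research plan, not a proof, as you yourself concede in your final sentence.

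What you do establish is sound and worth keeping. Since $\beta \mapsto f(\beta)$ is Galois-equivariant and $f$ is irreducible, every Galois orbit on the nine roots of $f\circ f$ has size divisible by $3$, so for monic irreducible $f$ reducibility of $f\circ f$ is equivalent to the existence of an irreducible cubic factor, equivalently to a section $\phi\in\mathbb{Q}[x]$, $\deg\phi\le 2$, with $f(\phi(x))\equiv x \pmod{f(x)}$; both directions of that equivalence check out (and $\beta=\phi(\alpha)$ is indeed an algebraic integer, being a root of the monic polynomial $f\circ f$). But the finiteness assertion — the entire content of the conjecture — is untouched. The variety $X\subset\mathbb{A}^6$ you construct is a threefold, and what must be proved finite is the set of integer points $(b,c,d)$ over which the degree-$27$ cover $X\to\mathbb{A}^3$ acquires a rational point. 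Hilbert irreducibility makes this set thin, which, as you correctly note, is compatible with it being infinite. Past that point every tool you invoke is either inapplicable or itself conjectural: Siegel and Faltings govern points on \emph{curves} only; finiteness of rational points on higher-dimensional varieties of general type is the open Bombieri--Lang conjecture; you never verify that any relevant piece of $X$ (or of its monic slice) is of general type, nor even identify those pieces; and the uniformity needed to pass from ``finitely many points on each piece'' to ``finitely many cubics in total'' would remain an obstacle even if each piece were handled. In effect the proposal trades one open conjecture for another open problem of at least comparable difficulty; the gap is not a repairable detail but the theorem itself.
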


\bibliographystyle{amsplain}
\bibliography{cubic-er-d1}
\end{document}